\documentclass[11pt]{article}
\usepackage{amssymb}
\usepackage[a4paper,margin=1in]{geometry}
\usepackage{amsmath,amssymb,amsthm,mathtools}
\usepackage[hidelinks]{hyperref}
\usepackage{microtype}

\newtheorem{theorem}{Theorem}
\newtheorem{lemma}[theorem]{Lemma}

\theoremstyle{definition}
\newtheorem{definition}[theorem]{Definition}
\newtheorem{example}[theorem]{Example}
\theoremstyle{remark}

\newcommand{\F}{\mathbb F}
\newcommand{\R}{\F_q[x]}
\newcommand{\V}{\mathcal V}
\newcommand{\C}{\mathcal C}
\newcommand{\PP}{\mathbb P}

\newcommand{\1}{\mathbf 1}

\title{An Asymptotic Bound for Non-covering Congruence Systems over
  \texorpdfstring{$\mathbb F_q[x]$}{Fq[x]}}
\author{Rongyin Wang}
\date{\today}

\begin{document}

\maketitle

\begin{abstract}
Fix a prime power $q$.  Let $D_q(n)$ be the largest possible least degree
of a polynomial omitted by a non-covering family of $n$ congruence classes
in $\mathbb F_q[x]$.  We prove
\[
  D_q(n)=\frac{n}{q-1}+O_q(1).
\]
The upper bound combines a minimal-counterexample reduction to irreducible
moduli with a truncated inclusion--exclusion (Brun sieve) argument.  A
nested-modulus construction gives the matching lower bound. This is a follow-up to the author's earlier work.
\end{abstract}

\section*{Notations and Conventions}

Throughout, $q$ is a fixed prime power and
\[
  s:=q-1.
\]
For an integer $m\geq 0$, write
\[
  \V_m:=\{f\in\R:\deg f<m\}.
\]
We use the convention $\deg 0=-\infty$, so that $0\in\V_m$ for every
$m\geq 0$.  Thus $\lvert\V_m\rvert=q^m$.

\section{Introduction}

Let $\F_q$ be a finite field and take its (univariate) polynomial ring $\F_q[x]$. A congruence class is a coset of an ideal $(f)\subseteq\R$. Following the usual notation in $\mathbb{Z}$, we write

\begin{definition}
    A congruence class is a set
\[
  r \pmod{f}
  :=
  \{g\in\R:g\equiv r\pmod f\},
\]
where $f$ is nonconstant.  A finite family of congruence classes is
\emph{non-covering} if its union is not all of $\R$.
\end{definition}

\begin{definition}
For $n\geq 1$, define
\[
  D_q(n):=
  \max_{\substack{\C\text{ is a non-covering family}\\
                   \lvert\C\rvert=n}}
  \min\{\deg g:g\notin\bigcup\C\}.
\]
\end{definition}

In the author's earlier work \cite{MR4954327}, we studied the analogue of Erd\H{o}s' covering system problem in the setting of polynomial rings over finite fields. We obtained a loose bound $D_q(n)< n$.

In the end of \cite{MR4954327}, it is conjectured that a sharper bound could be reached where $D_q(n)=\lfloor \frac{n}{s}\rfloor$, but the conjecture is false. For example, we have:

\begin{example}\label{ctex}
    Choose $\F_q=\F_7$ and a congruence system $B$ of 11 classes:
    \[
\begin{cases}
 0,1,4,6 \pmod{x},
\\
 6 \pmod{x-2},
\\
5 \pmod{x-3}
\\
4 \pmod{x-4}
\\
2,3,4,6 \pmod{x-6}
\end{cases}
\]
Then $B$ covers all polynomials of degree $<2=\lfloor\frac{11}{6}\rfloor+1$ in $\F_7[x]$ but not $x^2+2x+2$.
\end{example}

The main result of this paper shows the leading term $D_q(n)\sim \frac{n}{q-1}$ is correct, although we need an error term depending only on $q$. The main tool in our paper is to build a truncated sieve, separating our congruence classes into "low-modulus" ones which are larger but mutually independent and "high-modulus" ones which are smaller, so that we can control the probability of a uniformly chosen polynomial being sifted.

The rest of the paper is organised as follows:

In section \ref{thmsec} we state the main theorem of the paper,

In section \ref{lm1sec} we generalise Lemma 5 in \cite{MR4954327} to fit the asymptotic bound in this paper,

In section \ref{lem2sec} we build a Brun sieve on the polynomial ring with Chinese Remainder Theorem.

In section \ref{pfsec} we prove the main theorem.

\bigskip

\textbf{Acknowledgement.} I am very grateful to Dr Huixi Li of Nankai University for introducing me to Erdős's original conjecture and its analogue for polynomial rings over finite fields. I would also like to thank Dr Thomas Bloom of the University of Manchester for suggesting that I explore probabilistic methods to obtain the asymptotic bound.

\section{Statement of the Main Theorem}\label{thmsec}

We use the following previously established result.

\begin{theorem}[\cite{MR4954327}, theorem 2]\label{thm:known}
If a family of $n$ congruence classes covers $\V_n$, then it covers
$\R$.
\end{theorem}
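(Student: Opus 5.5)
The plan is to prove the contrapositive: if a family $\C$ of $n$ classes $r_i \pmod{f_i}$ misses some polynomial, then it already misses a polynomial of degree $<n$. The argument is by induction on $n$, with an inner induction on $\sum_i \deg f_i$. We take a minimal counterexample: $\C$ covers $\V_n$ but not $\R$, $n$ is smallest possible, and $\sum\deg f_i$ is smallest among families of that size.

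\emph{Base case and reduction to one modulus.} For $n=1$, a single class $r\pmod f$ with $\deg f\ge 1$ cannot contain both $0$ and $1$, since $f\nmid 1$. Hence it cannot cover $\V_1=\F_q$. Now let $n\ge 2$. Pick a class $r_1\pmod{f_1}$ and consider the residue of an uncovered polynomial modulo $f_1$. The key device is a change of variables that moves along one fixed class. Replace each other modulus $f_i$ by $\gcd(f_i,f_1)$-related data, or instead substitute $g=r'+f_1h$ with $r'\notin r_1+(f_1)$. The classes $r_i\pmod{f_i}$ then pull back to classes in the variable $h$ modulo $f_i/\gcd(f_i,f_1)$. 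Those with trivial quotient become either empty or everything.

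\emph{Choice of residue.} The main move is to choose a residue $t$ modulo some linear or irreducible factor $p$ of a modulus. Choose it so that the classes compatible with $g\equiv t\pmod p$ number at most $n-1$, the remaining ones lose one degree of their modulus, and the problem transfers to $h$ via $g=t+ph$. Degrees then scale as $\deg g\le 1+\deg h$.

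By induction the reduced family, of size $m\le n-1$ and non-covering, misses some $h$ with $\deg h<m$. This gives an uncovered $g$ with $\deg g\le m\le n-1<n$, a contradiction.

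We must also ensure the reduced family is genuinely non-covering. This holds because the original family misses some $g_0$. Take $t\equiv g_0\pmod p$. Then $h_0=(g_0-t)/p$ is uncovered, and at least one class is killed because $\C$ covers $\V_n\supseteq\F_q$, which forces some class to contain a constant.

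\emph{Main obstacle.} The delicate step is guaranteeing that the count strictly drops, i.e.\ that at least one class becomes incompatible with $g\equiv t\pmod p$, while $t$ is still chosen consistently with an uncovered polynomial. We try $p$ dividing the modulus $f_1$ of a class containing $g_0$'s neighbour constants. If no such drop is available for every residue $t$, then every class is compatible with every residue modulo $p$. In that case $p\nmid f_i$ for all $i$, and we instead pick $p$ among the $f_i$ themselves.

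If that fails too, we fall back on a counting argument. Among the $q$ constants $\F_q\subseteq\V_n$, each class with modulus divisible by $p$ contains constants of only one residue. Pigeonhole over the residues modulo $p$ then gives a residue $t$ hit by at most a proportion of the classes, which forces the strict decrease. This pigeonhole step is where I expect the real work to lie.
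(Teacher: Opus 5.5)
There is a genuine gap, and it sits in the degree bookkeeping of your slicing step. If $p$ is irreducible of degree $d$ and you write $g=t+ph$ with $\deg t<d$, then $\deg g\le\max(\deg t,\,d+\deg h)$. The bound $\deg g\le 1+\deg h$ holds only for linear $p$. Suppose $m$ classes survive on the slice. The induction hypothesis then gives an omitted $h$ with $\deg h\le m-1$, hence only $\deg g\le d+m-1$, and this is $<n$ exactly when $m\le n-d$. So you must remove at least $\deg p$ classes on a slice that still contains an omitted polynomial. The substitution $g=r'+f_1h$ has the same problem, since it costs $\deg f_1$ degrees. Nothing guarantees a linear factor among the moduli: over $\F_2$ every modulus could be $x^2+x+1$ or a cubic.

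Even your claim that one class is removed is a non sequitur. A class $r_i\pmod{f_i}$ disappears from the slice $g\equiv t\pmod p$ iff $p\mid f_i$ and $r_i\not\equiv t\pmod p$. The fact that some class contains a constant says nothing about this. Your pigeonhole fallback chooses $t$ without requiring the slice $t+p\R$ to contain an omitted polynomial. The reduced family may then cover everything, and the induction hypothesis gives nothing. Even when it does apply, ``a proportion'' of the classes is not $\deg p$ of them. Your inner induction on $\sum\deg f_i$ is also never used.

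This is not a repairable detail; it is the heart of the matter. Normalise as in Lemma~\ref{lem:reduction}, with $L(n)=n$ in place of $L_C(n)$: enlarge each class to a prime-power and then an irreducible modulus while keeping an omitted $\alpha$, and use minimality. The same argument then shows that in a minimal counterexample all moduli are irreducible. Slicing at a non-selected residue modulo $p$ removes exactly the $K_p$ classes modulo $p$. The surviving family is still non-covering by CRT and covers $\V_{n-\deg p}$, so minimality forces $K_p\le\deg p-1$ for every modulus $p$.

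So in precisely the configuration a minimal counterexample must have, no slice removes enough classes, and your slicing induction cannot exclude it. What is missing is a separate argument showing that a family with distinct irreducible moduli, $\sum_pK_p=n$ and $K_p<\deg p$, omits some element of $\V_n$. This needs an ingredient of a different kind, for example a counting argument using the CRT-independence of residues of $f\in\V_n$ modulo distinct irreducibles. That would be in the spirit of Lemma~\ref{lem:sieve}, but it would have to hold for every $n$, not just large $n$. Nothing in your proposal plays this role.

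For context, the paper does not reprove this statement; it imports it from \cite{MR4954327}. Your base case and contrapositive framing are fine, but the inductive step as written does not go through.
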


Our main result is the following.

\begin{theorem}\label{thm:main}
For every fixed prime power $q$, there is a constant $C_q$ such that
\[
  \left\lfloor\frac{n-1}{q-1}\right\rfloor
  \leq D_q(n)
  \leq
  \left\lceil\frac{n}{q-1}\right\rceil+C_q-1
\]
for every $n\geq 1$.  In particular,
\[
  D_q(n)=\frac{n}{q-1}+O_q(1)
  \qquad\text{and hence}\qquad
  D_q(n)\sim\frac{n}{q-1}.
\]
\end{theorem}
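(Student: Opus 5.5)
The plan has two parts: a construction for the lower bound, and a reduction followed by a sieve for the upper bound.

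\textbf{Lower bound.} Write $n-1=ks+r$ with $0\le r<s$, so $k=\lfloor (n-1)/s\rfloor$. I will use a nested-modulus construction with moduli $x,x^2,\dots,x^k$. Modulo $x$, take the $s$ nonzero residues; these cover every polynomial whose constant term is nonzero. At level $j\in\{2,\dots,k\}$, take the $s$ classes $c\,x^{j-1}\pmod{x^j}$ with $c\in\F_q^\times$. These cover exactly the polynomials divisible by $x^{j-1}$ whose coefficient of $x^{j-1}$ is nonzero. Together the first $k$ levels use $ks$ classes and cover every $g$ with $g\not\equiv 0\pmod{x^k}$. Hence the only uncovered polynomials are the multiples of $x^k$. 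I then add one class, $0\pmod{x^k}$ minus nothing, or more simply one nonzero class $c\,x^k\pmod{x^{k+1}}$ chosen to avoid $x^k$, and pad with $r$ repeated classes. Then $0$ is covered, the family is non-covering, and every uncovered $g$ is a nonzero multiple of $x^k$. So the least uncovered degree is at least $k$. The details to check are that the padding keeps exactly $n$ classes and that the added class removes $0$; with $n-1=ks+r$ this is one spare class plus $r$ duplicates.

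\textbf{Upper bound, reduction.} Take a non-covering family $\C$ of $n$ classes that covers $\V_m$, where $m=\lceil n/s\rceil+C_q-1$; I will derive a contradiction. Choose such a counterexample minimal, following the minimal-counterexample idea generalising Lemma 5 of \cite{MR4954327}. Two reductions come out of this:
\begin{itemize}
\item[(a)] Every modulus can be replaced by an irreducible factor of it. The class $r\pmod f$ is contained in $r\pmod p$ for $p\mid f$, so enlarging classes keeps the covering of $\V_m$. One has to handle the possibility that the enlarged family covers $\R$; I would use Theorem~\ref{thm:known} together with an induction on the structure of the moduli to show that a minimal counterexample already has this form.
\item[(b)] For an irreducible $p$ of degree $d$, a class $a\pmod p$ covers exactly $q^{m-d}$ elements of $\V_m$ when $d\le m$.
\end{itemize}
Because the moduli are now irreducible, classes with distinct moduli behave independently under the Chinese Remainder Theorem when we pass to $\V_M$ for $M$ at least the sum of the degrees involved.

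\textbf{Upper bound, sieve.} Let $g$ be uniform in $\V_M$ for a suitable $M$, or work modulo the product of the moduli. For each irreducible $p$ let $t_p$ be the number of classes with modulus $p$. The probability that $g$ avoids all classes is then exactly $\prod_p (1-t_p/q^{\deg p})$, which is positive because the family is non-covering. So every $t_p<q^{\deg p}$.

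The real task is to show that covering $\V_m$ forces $n$ to be large. Split the moduli into low-degree ones ($\deg p\le L$ for a constant $L=L_q$) and high-degree ones.
\begin{itemize}
\item[(i)] For the low-degree classes, the CRT gives independence inside $\V_m$ once $m$ exceeds the total degree of the low moduli, which is $O_q(1)$. So the surviving fraction of $\V_m$ is exactly $\prod_{\deg p\le L}(1-t_p q^{-\deg p})$.
\item[(ii)] Each high-degree class removes at most $q^{-\deg p}$ of $\V_m$, and there are few possible moduli per degree. A truncated Bonferroni (Brun) inequality, keeping only the first-order term for the high-degree classes, bounds the surviving fraction from below by
\[
\prod_{\text{low}}(1-t_pq^{-\deg p})-\sum_{\text{high}}t_pq^{-\deg p}.
\]
\end{itemize}

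The key extremal inequality comes from maximising the covered fraction for a given number of classes. Each modulus $p$ contributes at most $q^{\deg p}-1$ classes. The product $\prod(1-t_p q^{-\deg p})$ is at least $q^{-\sum \deg p}$ when each $t_p\le q^{\deg p}-1$. Under a budget of $n$ classes, this product is minimised using linear moduli, each costing $s$ classes per factor $1/q$, giving roughly $q^{-n/s}$. One must check that $\log(1/(1-t/q^d))$ per class is maximised at $d=1$, $t=s$, with ratio $\log q/s$; this is where $s=q-1$ enters. Combining this with the sieve shows the surviving fraction of $\V_m$ is at least $q^{-n/s}\cdot c_q$ minus a high-modulus error, and that this is positive once $m\ge n/s+C_q$. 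Therefore some element of $\V_m$ is uncovered, a contradiction.

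\textbf{Main obstacle.} I expect the hardest step to be controlling the high-degree error term relative to $q^{-n/s}$, since the survivors are exponentially rare. I would first try to absorb each high-degree modulus into the product as well. This works if high moduli of degree $\le m$ can also be treated exactly via CRT restricted to $\V_m$, with only moduli of degree $>m-O(1)$ handled crudely. Those crude moduli cover at most $q^{O(1)}$ points each, and they are only useful if $n$ is large, which again costs budget.
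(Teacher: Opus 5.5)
\textbf{Upper bound: the missing step is the reduction (a).} Enlarging $r\pmod{p^e}$ to $r\pmod p$ can swallow every uncovered point. Applied to your own nested example, it yields all residues mod $x$. Theorem~\ref{thm:known} gives no leverage here, since it concerns $\V_n$, not $\V_{n/s}$.

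What you are missing is the slicing step of the minimal-counterexample argument. Fix an uncovered $\alpha$. Replace each modulus by a prime-power factor $p^e$ at which the class still misses $\alpha$, and by $p$ itself when $r\not\equiv\alpha\pmod p$. For irreducible $p$ of degree $d$, the map $h\mapsto\alpha_p+ph$ identifies $\V_{m-d}$ with the part of $\V_m$ lying over $\alpha_p$ (here $m=L_C(n)$). The $k_p$ classes of modulus exactly $p$ miss this slice, and the remaining classes pull back to a non-covering family of $n-k_p$ classes covering $\V_{m-d}$. Since $k_p\ge sd$ gives $L_C(n-k_p)\le L_C(n)-d$, minimality forces $k_p<s\deg p$.

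Only this bound lets you merge all classes $r\pmod{p^e}$ with $e>1$ and $r\equiv\alpha\pmod p$ into the single class $\alpha_p\pmod p$. You can still find an uncovered $\beta$ by CRT, because $k_p+1\le sd\le q^d-1$. A count then gives irreducible moduli with at most $s\deg p$ residues each.

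\textbf{Upper bound: the sieve cannot close without that bound.} You compare survivors in $\V_m$ with $\prod_p(1-t_pq^{-d_p})\ge q^{-n/s}$; your per-class extremal computation is correct. But this main term is exponentially small. Inside $\V_m$ the only exact information is CRT independence for sets of moduli of \emph{total} degree at most $m$.
\begin{itemize}
\item With a constant threshold $L$, the first-order error $\sum_{\mathrm{high}}t_pq^{-\min(d_p,m)}$ can be of order $nq^{-L}$.
\item A single modulus just above $L$ carrying $q^{d_p}-1$ residues already contributes almost $1$, so your lower bound goes negative.
\item Your proposed repair, treating every modulus of degree $\le m$ exactly, is not valid. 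Joint independence of the events $g\bmod p_i\in B_{p_i}$ on $\V_m$ requires $d_{p_1}+\dots+d_{p_j}\le m$, not $d_{p_i}\le m$ for each $i$.
\end{itemize}

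In the paper, $k_p\le s\deg p$ together with $I_q(d)\le q^d/d$ bounds the density contributed by each degree by $s$. So the low-modulus product is $\ge c_q(n+2)^{-a_q}$, which is only polynomially small. Bonferroni truncated at order $\asymp\log n$ over moduli of degree $\le Z\asymp\log n$ has level $O(\log^2 n)\le m$. Together with the union bound $nq^{-Z}$ for the rest, this leaves a positive proportion of survivors. The linear term $n/s$ comes from the slicing induction, never from comparison with $q^{-n/s}$. Small $n$ is handled by Theorem~\ref{thm:known} with $C_q=N_q$.

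\textbf{Lower bound: a smaller slip.} This part is otherwise the paper's nested construction, but neither closing class you propose works. The class $c\,x^k\pmod{x^{k+1}}$ with $c\neq 0$ does not contain $0$, so the least uncovered degree becomes $\deg 0=-\infty$. The class $0\pmod{x^k}$ contains every remaining multiple of $x^k$, so the family covers $\R$. Use $0\pmod{x^{k+1}}$, which contains $0$ but not $x^k$. Also pad with distinct classes such as $0\pmod{p_i}$ with $\deg p_i>k$ rather than with repeats, since $\lvert\C\rvert=n$ counts distinct classes.
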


\section{A generalized minimal-counterexample reduction}\label{lm1sec}

Fix an integer $C\geq 1$, and define
\[
  L_C(n):=\left\lceil\frac{n}{s}\right\rceil+C.
\]

\begin{lemma}[Irreducible-modulus reduction]\label{lem:reduction}
Suppose that $n$ is minimal with the following property: there is a family
of $n$ congruence classes which covers $\V_{L_C(n)}$ but does not cover
$\R$.  Then there is such a family with the following additional
properties.
\begin{enumerate}
  \item Every modulus is irreducible.
  \item If $p$ is irreducible of degree $d$, then at most $s\cdot d$ of the
  classes have modulus $p$.
\end{enumerate}
\end{lemma}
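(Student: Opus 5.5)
The plan is a minimal-counterexample argument with a secondary minimality. Among all families of $n$ classes that cover $\V_{L_C(n)}$ but not $\R$, with $n$ minimal, fix one minimising $\Sigma:=\sum\deg(\text{moduli})$. Write $L:=L_C(n)$. Everything rests on a \emph{restriction step}. Take an irreducible $p$ of degree $d$ and a residue $a$ with $\deg a<d$. Parametrise the coset $a+p\R$ by $t\mapsto a+pt$ and pull back every class of the family. A class $r \pmod f$ with $p\nmid f$ pulls back to a class modulo $f$. A class with $p\mid f$ and $f\neq p$ (up to units) pulls back to a class modulo $f/p$, which is nonconstant, or is empty. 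A class with modulus $p$ pulls back to all of $\R$ or to nothing. If $\deg t<L-d$ then $\deg(a+pt)<L$. So, provided the coset contains an uncovered polynomial, the pulled-back family covers $\V_{L-d}$ but not $\R$. In that case no class with modulus $p$ can contain the coset, so those classes disappear. The new family therefore has at most $n-k$ classes, where $k$ is the number of classes with modulus $p$.

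Property~2 follows from this step directly. Suppose $k\geq sd$, and pick a coset $a+p\R$ containing an uncovered polynomial. The restricted family has $n'\leq n-sd$ classes, and $\lceil n'/s\rceil+C\leq\lceil n/s\rceil-d+C=L-d$, so it covers $\V_{L_C(n')}$ but not $\R$. If $n'\geq1$ this contradicts the minimality of $n$. If $n'=0$, then every class has modulus $p$. Since $L>d$, covering $\V_L\supseteq\V_d$ forces all $q^d$ residues mod $p$ to occur, and then the family covers $\R$, which is again a contradiction. Hence $k<sd$, which is slightly stronger than property~2. It is enough to prove this after property~1 has been arranged, since the restriction step does not use irreducibility of the other moduli.

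For property~1, suppose some modulus $f$ is reducible, and let $p$ be an irreducible factor of $f$ of degree $d$. Replace $r\pmod f$ by $r\pmod p$. The new family still covers $\V_L$ and has smaller $\Sigma$. So by the choice of $\Sigma$ it must cover $\R$. Consequently every uncovered polynomial of the original family lies in $r+p\R$. Now apply the restriction step to the coset $r+p\R$. Its image is a family of at most $n-k$ classes that covers $\V_{L-d}$ but not $\R$, and in it the former class $r\pmod f$ has become a class modulo $f/p$.

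The main obstacle is that when $k$ is small (possibly $0$) this gives no contradiction with minimality of $n$: we lose $d$ in the degree but gain nothing in the count. My first attempt would be to exploit the extra information that every uncovered point lies in the single coset $r+p\R$. Every other coset $a+p\R$, $a\not\equiv r$, is then entirely covered. One could try to show that this is wasteful, namely that for $L-d$ large covering a whole coset needs on the order of $s\cdot d$ or more classes devoted to it. Such a count would bound $n$ from below by roughly $(q^d-1)$ times the cost of one coset, and hence force $k$ or the class count to be large enough to restore the inequality $\lceil (n-m)/s\rceil+C\leq L-d$ for the number $m$ of classes eliminated. If this counting fails, the fallback is to follow the corresponding step of Lemma~5 in \cite{MR4954327} verbatim, replacing its threshold $n$ by $L_C(n)$. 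That is the step where the precise bookkeeping of $\lceil\cdot/s\rceil$ would have to be checked.
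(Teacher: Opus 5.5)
Your restriction step and the bound $k<sd$ it gives are correct and match the paper's slice argument. The genuine gap is property~1, which you leave open. It comes from choosing the irreducible factor $p\mid f$ arbitrarily.

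The first missing idea is to let a fixed uncovered polynomial $\alpha$ dictate that choice. Write $f=\prod_j p_j^{e_j}$. Since $\alpha\not\equiv r\pmod f$, CRT gives some $j$ with $\alpha\not\equiv r\pmod{p_j^{e_j}}$. Replacing $r\pmod f$ by $r\pmod{p_j^{e_j}}$ enlarges the class but still omits $\alpha$. Do the same with $r\pmod{p_j}$ whenever $\alpha\not\equiv r\pmod{p_j}$. This reduces everything to prime-power moduli. The only survivors are classes $r\pmod{p^e}$ with $e\geq 2$ and $r\equiv\alpha\pmod p$, which is exactly your stuck case where all uncovered points lie in $r+p\R$.

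Your proposed count cannot handle that case. A whole coset $a+p\R$ can be covered by the single class $a\pmod p$, so there is no per-coset cost of order $sd$. Following the earlier paper's Lemma~5 ``verbatim'' is not an argument.

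The second missing idea is to give up $\alpha$ and manufacture a new omitted point by CRT. Let $\alpha_p$ denote the residue of $\alpha$ modulo $p$. For each irreducible $p$, replace all $b_p$ classes with modulus $p^e$, $e\geq 2$, by the single class $\alpha_p\pmod p$. Each of those classes lies inside $\alpha_p\pmod p$, so coverage of $\V_L$ is kept.

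Now apply your bound $k_p<sd$ to the prime-power family; it is needed \emph{before} property~1, not after. Together with $q^d\geq 1+sd$, it lets you pick $\beta_p\bmod p$ different from $\alpha_p$ and from the $k_p$ residues of the modulus-$p$ classes. A CRT solution $\beta$ then avoids every modified class.

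The new family has at most $\sum_p k_p+\#\{p:b_p>0\}\leq n$ classes. Minimality of $n$ forces equality, so $b_p\leq 1$ for every $p$. This yields irreducible moduli with at most $sd$ residues each. In your $\Sigma$ set-up, the same replacement contradicts minimality of $n$ or of $\Sigma$ directly.

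One small repair is needed when you use $k_p<sd$ before irreducibility. There, $n'=0$ no longer implies that every class has modulus $p$, because a class modulo $p^e$ can pull back to the empty set. Instead note that $0\in\V_{L-d}$ must be covered, so $n'\geq 1$.
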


\begin{proof}
Let $\C$ be a family with $n$ minimal, and choose
$\alpha\in\R\setminus\bigcup \C$.

\medskip
\noindent
First, suppose a class has modulus
\[
  f=\prod_{j=1}^u p_j^{e_j}.
\]
Because $\alpha\not\equiv r\pmod f$, there is some $j$ such that
$\alpha\not\equiv r\pmod{p_j^{e_j}}$.  Replacing $r\pmod f$ by
$r\pmod{p_j^{e_j}}$ enlarges the class, preserves coverage of
$\V_{L_C(n)}$, and still omits $\alpha$.  We may therefore assume that
every modulus is a prime power. Additionally, if $\alpha\not\equiv r \pmod {p_j}$, we can replace $r\pmod {p_j^{e_j}}$ with $r \pmod {p_j}$. We claim this won't create duplicated classes, otherwise we can remove it and obtain a smaller family, contradiction.

\medskip
\noindent
Let $k_p$ be the number of classes whose modulus is exactly $p$ and $\alpha_p$ be such that $\alpha_p\in \V_{d}, \alpha_p\equiv \alpha\pmod p$.  Their
residues are distinct and different from $\alpha_p$.

Assume first that $d<L_C(n)$.  

There are three types of congruence classes:  
\[
\begin{cases}
r_i(x) \pmod{p(x)} \text{ such that }
\alpha(x) \not\equiv r_i(x) \pmod{p(x)}& (a), 
\\
\kappa(x) p(x) +\alpha_p(x) \pmod{p(x)^{\lambda}}
\text{ for some }
\lambda>1& (b), 
\\
r'_i(x) \pmod{u(x)^e} 
\text{ such that }
\gcd(p(x),u(x)^e)=1 & (c).
\end{cases}
\]

The map
\[
   \tau:\V_{L_C(n)-d} \rightarrow \V_{L_C(n)}, h\longmapsto \alpha_p+ph
\]
is a bijection from $\V_{L_C(n)-d}$ onto the elements of
$\V_{L_C(n)}$ which are congruent to $\alpha_p$ modulo $p$.  None of
the $k_p$ type-$(a)$ classes meets this slice.

\begin{itemize}
  \item A type-(b) class
  \[
    \alpha_p+p\kappa\pmod{p^e},\qquad e>1,
  \]
  induces a class $C:=\kappa\pmod{p^{e-1}}$;
  \item For a type-(c) class $C_0:=r\pmod{u^e}$, where $u\neq p$, taking $\tau^{-1}(C_0\cap (\alpha_p \pmod{p}))$ gives us a class
  \[
    p^{-1}(r-\alpha_p)\pmod{u^e}.
  \]
\end{itemize}
Thus the $n-k_p$ classes in total form a non-covering family which
covers $\V_{L_C(n)-d}$.  The induced family is non-covering because it
omits $(\alpha-\alpha_p)/p$.

If $k_p\geq sd$, then
\[
  n-k_p\leq n-sd
  \leq
  s\left(\left\lceil\frac ns\right\rceil-d\right),
\]
and hence
\[
  \left\lceil\frac{n-k_p}{s}\right\rceil
  \leq
  \left\lceil\frac ns\right\rceil-d.
\]
Consequently,
\[
  L_C(n-k_p)\leq L_C(n)-d.
\]
After deleting any duplicate induced classes, if necessary, we obtain a
counterexample with fewer than $n$ classes, contradicting minimality.
Therefore
\[
  k_p<sd.
  \tag{3.1}\label{eq:simple-bound}
\]

If $d\geq L_C(n)$, then
\[
  sd\geq sL_C(n)
  \geq n+sC>n,
\]
so $k_p\leq n<sd$.  Hence \eqref{eq:simple-bound} holds for every $p$.

\medskip
\noindent

Back to assertion 1. Fix irreducible $p$ and let $b_p$ be the number of remaining classes whose
moduli are $p^e$ with $e>1$.  Replace all these $b_p$ classes, when
$b_p>0$, by the single class
\[
  \alpha_p\pmod p.
\]
This preserves coverage of $\V_{L_C(n)}$.

There is a residue $\beta_p\in\R/(p)$ which is different from
$\alpha_p$ and from all $k_p$ simple selected residues.  Indeed,
\eqref{eq:simple-bound} gives
\[
  k_p+1\leq sd,
\]
while
\[
  q^d=(1+s)^d\geq 1+sd.
\]
Thus at most $q^d-1$ residues have been excluded.

Choose such a $\beta_p$ for every irreducible $p$ appearing in the
family.  By the Chinese Remainder Theorem, there is a polynomial $\beta$
such that
\[
  \beta\equiv\beta_p\pmod p
\]
for every such $p$.  The polynomial $\beta$ avoids every class in the
modified family, so that family is still non-covering.

The number of classes in the modified family is
\[
  n'
  =
  \sum_p\bigl(k_p+\1_{\{b_p>0\}}\bigr)
  \leq
  \sum_p(k_p+b_p)
  =n.
\]
If $n'<n$, then the modified family covers $\V_{L_C(n)}$, and therefore
also $\V_{L_C(n')}$, while remaining non-covering.  This contradicts the
minimality of $n$.  Thus $n'=n$, which forces $b_p\leq 1$ for every
$p$.

After the replacements, all moduli are irreducible.  The number $K_p$ of
selected residues modulo $p$ satisfies
\[
  K_p=k_p+\1_{\{b_p=1\}}\leq sd.
\]
The residue $\beta_p$ is not selected, so the selected residue set is
proper.  This proves both assertions.
\end{proof}

\section{A truncated CRT sieve}\label{lem2sec}

\begin{lemma}[Truncated CRT sieve]\label{lem:sieve}
For every fixed $q$, there are constants $H_q>0$ and $N_q$ with the
following property.  Let $n\geq N_q$, and let $\mathcal P$ be a finite
set of distinct monic irreducible polynomials.  For each
$p\in\mathcal P$, let
\[
  B_p\subsetneq\R/(p),\qquad
  k_p:=|B_p|,\qquad
  d_p:=\deg p,
\]
and suppose that
\[
  k_p\leq sd_p,
  \qquad
  \sum_{p\in\mathcal P}k_p=n.
  \tag{4.1}\label{eq:sieve-assumptions}
\]
If
\[
  m\geq H_q\left\lceil\log_q(n+2)\right\rceil^2,
  \tag{4.2}\label{eq:sieve-range}
\]
then there is an $f\in\V_m$ such that
\[
  f\bmod p\notin B_p
  \qquad\text{for every }p\in\mathcal P.
\]
\end{lemma}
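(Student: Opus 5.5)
The plan is to show that a positive proportion (at least $n^{-O_q(1)}$) of $f\in\V_m$ avoid every $B_p$. To do this we split $\mathcal P$ into three ranges of degree and handle each by counting inside $\V_m$. Fix a constant $d_0=d_0(q)$ such that $sd/q^d\leq\tfrac12$ for all $d>d_0$. Put
\[
  D:=A\left\lceil\log_q(n+2)\right\rceil
\]
for a large constant $A=A(q)$. The three ranges are:
\begin{itemize}
  \item \emph{tiny} primes, with $d_p\leq d_0$;
  \item \emph{low} primes, with $d_0<d_p\leq D$;
  \item \emph{high} primes, with $d_p>D$.
\end{itemize}

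\textbf{Tiny primes.} There are only $O_q(1)$ of them, with total degree $E_0\leq\sum_{d\leq d_0}q^d=O_q(1)$. Since each $B_p$ is proper, the Chinese Remainder Theorem gives a residue $\gamma$ modulo $P_0:=\prod_{\text{tiny}}p$ with $\gamma\bmod p\notin B_p$ for every tiny $p$. We then restrict attention to the slice
\[
  \mathcal F:=\{f\in\V_m: f\equiv\gamma\pmod{P_0}\},
  \qquad |\mathcal F|=q^{m-E_0}.
\]
For any squarefree $Q$ coprime to $P_0$ with $\deg Q+E_0\leq m$, the elements of $\mathcal F$ are exactly equidistributed modulo $Q$. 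This exact equidistribution is the only arithmetic input we need.

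\textbf{Low primes: truncated inclusion--exclusion.} For a low prime $p$ write $\delta_p:=k_p/q^{d_p}\leq sd_p/q^{d_p}\leq\tfrac12$. Let $t$ be an even integer of size $t\asymp c\log_q(n+2)$. We apply the Bonferroni inequality truncated at level $t$ to the events ``$f\bmod p\in B_p$'' for low $p$. Every term involves a product of at most $t$ low primes, of total degree at most $tD\asymp cA\log_q^2(n+2)$. So if $H_q$ is chosen large relative to $cA$, each term is computed exactly by equidistribution: the term indexed by a set $T$ equals $|\mathcal F|\prod_{p\in T}\delta_p$. This yields
\[
  \#\{f\in\mathcal F:\text{$f$ avoids all low }B_p\}
  \geq |\mathcal F|\Bigl(\prod_{\text{low}}(1-\delta_p)-\frac{S^{t+1}}{(t+1)!}\Bigr),
  \qquad S:=\sum_{\text{low}}\delta_p .
\]
The key estimate is on $S$. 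Using $k_p\leq sd_p$, the count of at most $q^d/d$ primes of degree $d$, and $\sum k_p=n$, the total contribution of degree $d$ is at most $\min(s,\,n q^{-d})$. Summing over $d$ gives
\[
  S\leq s\log_q(n+2)+O_q(1).
\]
Since $\delta_p\leq\tfrac12$, we have $\prod(1-\delta_p)\geq e^{-2S}\geq (n+2)^{-c_1}$ with $c_1=c_1(q)$. Choosing $t\geq e^2S$ (that is, $c$ large) gives $S^{t+1}/(t+1)!\leq e^{-t}\leq (n+2)^{-2c_1}$. So the main term dominates.

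\textbf{High primes and the main obstacle.} For a high prime $p$, the number of $f\in\mathcal F$ with $f\bmod p\in B_p$ is at most $k_p(q^{m-E_0-d_p}+1)$. This is an exact count when $d_p\leq m-E_0$, and otherwise each residue class of $p$ meets $\V_m$ in at most one element. Summing over high primes, the total number of such $f$ is at most
\[
  |\mathcal F|\bigl(n q^{-D}+n q^{-(m-E_0)}\bigr)\leq |\mathcal F|\,(n+2)^{1-A}\cdot O_q(1).
\]
Taking $A>c_1+2$ makes this smaller than half of the low-prime survivors once $n\geq N_q$, so some $f\in\mathcal F\subseteq\V_m$ survives every condition.

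The delicate point is balancing the constants. The level $t$ must exceed $e^2S\approx e^2 s\log_q n$ to beat the main term, while $tD\approx cA\log_q^2 n$ must stay below $m$. This balance is exactly why \eqref{eq:sieve-range} needs $\log^2$, and why $H_q$ must be chosen only after $c_1$, $A$, and $c$ are fixed in that order.
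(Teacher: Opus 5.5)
Your proposal is correct and follows essentially the paper's route. Both arguments use exact CRT equidistribution on $\V_m$, a Bonferroni truncation at level $\asymp\log_q n$ over primes of degree up to $\asymp\log_q n$ (which is where the $\log^2$ range comes from), a polynomial lower bound for $\prod(1-\lambda_p)$, and a union bound for the high primes. Your conditioning on a good residue modulo the tiny primes is just a variant of the paper's bound $1-\lambda_p\geq q^{-d_p}$, and your single-term error $e_{t+1}$ is a sharper form of its Poisson-tail estimate.

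Two small fixes are needed. First, the truncated alternating sum is a lower bound only when its last index is odd, so take $t$ odd rather than even. Second, for high $p$ with $m-E_0<d_p<m$ a class modulo $p$ can meet $\V_m$ in more than one element. Justify the ``$+1$'' instead by noting that, for each $b\in B_p$, $\{f\in\mathcal F: f\equiv b\pmod p\}$ lies in a single class modulo $P_0p$, whose degree exceeds $m$.
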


\begin{proof}
We use a probabilistic method. Choose $f$ uniformly from $\V_m$.  For $p\in\mathcal P$, define the events
\[
  E_p:=\{f\bmod p\in B_p\}.
\]
If $d_p\leq m$, then the reduction map
$\V_m\to\R/(p)$ is uniform, so
\[
  \lambda_p:=\PP(E_p)=\frac{k_p}{q^{d_p}}.
  \tag{4.3}\label{eq:lambda}
\]

More generally, if $p_1,\dots,p_j$ are distinct and
\[
  d_{p_1}+\cdots+d_{p_j}\leq m,
\]
then reduction modulo $p_1\cdots p_j$ is uniform on $\V_m$.  The Chinese
Remainder Theorem therefore gives the exact identity
\[
  \PP(E_{p_1}\cap\cdots\cap E_{p_j})
  =
  \lambda_{p_1}\cdots\lambda_{p_j}.
  \tag{4.4}\label{eq:limited-independence}
\]

Put
\[
  t:=\left\lceil\log_q(n+2)\right\rceil.
\]
We shall choose constants $K_q,R_q>0$, depending only on $q$, and set
\[
  Z:=\lceil K_qt\rceil.
\]
Call $p$ \emph{low} if $d_p\leq Z$, and set
\[
  P:=\prod_{d_p\leq Z}(1-\lambda_p),
  \qquad
  \mu:=\sum_{d_p\leq Z}\lambda_p.
  \tag{4.5}\label{eq:P-mu}
\]

\medskip
\noindent
\textbf{A lower bound for $P$ and an upper bound for $\mu$.}
Let $I_q(d)$ be the number of monic irreducible polynomials of degree
$d$ over $\F_q$.  Since each such polynomial has $d$ distinct roots
in $\F_{q^d}$, and distinct irreducibles have disjoint root sets,
\[
  I_q(d)\leq\frac{q^d}{d}.
  \tag{4.6}\label{eq:irreducibles}
\]
It follows from \eqref{eq:sieve-assumptions} that
\[
  \sum_{\substack{p\in\mathcal P\\d_p=d}}\lambda_p
  \leq
  \frac{q^d}{d}\frac{sd}{q^d}
  =s.
  \tag{4.7}\label{eq:density-per-degree}
\]

Choose $d_0=d_0(q)$ so that
\[
  \frac{sd}{q^d}\leq\frac12
  \qquad(d\geq d_0).
\]
For $d_p\geq d_0$, we have
\[
  -\log(1-\lambda_p)\leq 2\lambda_p,
  \tag{4.8}\label{eq:log-bound} 
\]
by considering $-\log(1-x)-2x\leq0,0\leq x\leq\frac{1}{2}$.

For $d_p<d_0$, the assumption $B_p\subsetneq\R/(p)$ gives
\[
  1-\lambda_p\geq q^{-d_p}.
\]
There are only finitely many such $p$, so their total contribution to
$-\log P$ is bounded by a constant depending only on $q$.

Using \eqref{eq:density-per-degree} and \eqref{eq:log-bound} for
$d_0\leq d\leq t$, we obtain a contribution at most $2st+O_q(1)$.
For $t<d_p\leq Z$, we have
\[
  \sum_{t<d_p\leq Z}\lambda_p
  \leq
  q^{-t}\sum_{p\in\mathcal P}k_p
  \leq nq^{-t}\leq 1.
\]
After increasing the constant to deal with the finitely many cases where
\eqref{eq:log-bound} is unavailable, there are constants $a_q>0$ and
$c_q>0$ such that
\[
  P\geq c_q(n+2)^{-a_q}.
  \tag{4.9}\label{eq:P-lower}
\]
The same estimates, without logarithms, give a constant $b_q>0$ such that
\[
  \mu\leq b_qt.
  \tag{4.10}\label{eq:mu-upper}
\]

\medskip
\noindent
\textbf{Truncated inclusion--exclusion for the low moduli.}
Choose $K_q>a_q+2$, $R_q$ sufficiently large that
\[
  \frac{e b_q}{R_q}\leq\frac{1}{8}
  \qquad\text{and}\qquad
  \frac{R_q\log 4}{\log q}>a_q+2.
  \tag{4.11}\label{eq:R-choice}
\]
Let $r$ be the largest odd integer not exceeding $R_qt$.  Choose
$H_q\geq (K_q+1)R_q$, so that $H_q\cdot t^2\geq R_q t\cdot (K_q+1)t\geq R_qt\cdot \lceil K_q t\rceil$, therefore \eqref{eq:sieve-range} implies
\[
  rZ\leq m.
  \tag{4.12}\label{eq:level}
\]

For $j\leq r$, every set of $j$ low moduli has total degree at most
$jZ\leq m$.  Hence \eqref{eq:limited-independence} applies to every such
intersection.

Let $e_j(\lambda)$ be the $j$-th elementary symmetric polynomial in
the numbers $\lambda_p$ for the low moduli.  Bonferroni's inequality,
with $r$ odd, gives
\[
  \PP(\text{none of the low }E_p\text{ occurs})=1-\PP(\bigcup_{d_p\leq Z} E_p)
  \geq
  1-(\sum_{j=1}^r(-1)^{j-1} e_j(\lambda))=
  \sum_{j=0}^{r}(-1)^je_j(\lambda).
  \tag{4.13}\label{eq:bonferroni}
\]
On the other hand,
\[
  P=\prod_{d_p\leq Z}(1-\lambda_p)
   =\sum_{j\geq 0}(-1)^je_j(\lambda).
\]
Since $e_j(\lambda)\leq\mu^j/j!$,
\[
  \sum_{j>r}e_j(\lambda)
  \leq
  \sum_{j>r}\frac{\mu^j}{j!}.
  \tag{4.14}\label{eq:poisson-tail}
\]

For sufficiently large $n$, equations
\eqref{eq:mu-upper}--\eqref{eq:R-choice} imply
\[
  \frac{e\mu}{r+1}\leq\frac{eb_qt}{R_qt-1}\leq\frac{1}{4}
  \qquad\text{and}\qquad
  \frac{\mu}{r+2}\leq\frac12.
\]
Using $j!\geq(j/e)^j$ for the first term and 
\[
\frac{\mu^{j+1}}{(j+1)!}/\frac{\mu^j}{j!}=\frac{\mu}{j+1}\leq\frac{\mu}{r+2}\leq \frac{1}{2}
\]
gives
\[
  \sum_{j>r}\frac{\mu^j}{j!}
  \leq
  2\left(\frac{e\mu}{r+1}\right)^{r+1}
  \leq 2\cdot 4^{-(r+1)}.
  \tag{4.15}\label{eq:tail-final}
\]
By \eqref{eq:R-choice} and \eqref{eq:P-lower}, after increasing the lower
threshold for $n$ if necessary,
\[
  2\cdot 4^{-(r+1)}\leq\frac14P.
  \tag{4.16}\label{eq:tail-vs-P}
\]
Combining \eqref{eq:bonferroni}--\eqref{eq:tail-vs-P} yields
\[
  \PP(\text{none of the low }E_p\text{ occurs})
  \geq \frac34P.
  \tag{4.17}\label{eq:avoid-low}
\]

\medskip
\noindent
\textbf{The high moduli.}
If $Z<d_p$, we have
\[
  \PP(E_p)\leq k_pq^{\max\{-d_p,-m\}}\leq k_p q^{-Z}.
\]
The union bound therefore gives
\[
  \PP(\text{some high }E_p\text{ occurs})
  \leq nq^{-Z}.
  \tag{4.18}\label{eq:high-bound}
\]
Recall $K_q>a_q+2$.  Since $Z\geq K_qt$, equation
\eqref{eq:P-lower} implies, for sufficiently large
$n$,
\[
  nq^{-Z}\leq\frac14P.
  \tag{4.19}\label{eq:high-vs-P}
\]

Finally, \eqref{eq:avoid-low} and \eqref{eq:high-vs-P} give
\[
  \PP(\text{no }E_p\text{ occurs})
  \geq
  \frac34P-\frac14P
  =
  \frac12P>0.
\]
Thus some $f\in\V_m$ avoids all the sets $B_p$.  All estimates used above
hold once $n$ exceeds a constant depending only on $q$; this defines
$N_q$.
\end{proof}

\section{Proof of the main theorem}\label{pfsec}

\begin{proof}[Proof of Theorem~\ref{thm:main}]
Lemma~\ref{lem:sieve} gives constants $H_q$ and $N_q$ such that its
conclusion holds for $n\geq N_q$ whenever
\[
  m\geq H_q\left\lceil\log_q(n+2)\right\rceil^2.
\]
Increase $N_q$, if necessary, so that
\[
  \frac ns
  \geq
  H_q\left\lceil\log_q(n+2)\right\rceil^2
  \qquad(n>N_q).
  \tag{5.1}\label{eq:linear-beats-log}
\]
Set
\[
  C_q:=N_q.
\]

Suppose, for a contradiction, that there is a family of $n$ classes which
covers
\[
  \V_{L_{C_q}(n)},
  \qquad
  L_{C_q}(n)=\left\lceil\frac ns\right\rceil+C_q,
\]
but does not cover $\R$.  Choose $n$ minimal.

If $n\leq N_q$, then
\[
  L_{C_q}(n)\geq C_q=N_q\geq n.
\]
The family therefore covers $\V_n$, contradicting
Theorem~\ref{thm:known}.

Hence $n>N_q$.  Lemma~\ref{lem:reduction} produces a counterexample with
irreducible moduli and forbidden residue sets $B_p$ satisfying
\[
  |B_p|\leq s\deg p,
  \qquad
  \sum_p|B_p|=n.
\]
By \eqref{eq:linear-beats-log},
\[
  L_{C_q}(n)\geq\frac ns
  \geq
  H_q\left\lceil\log_q(n+2)\right\rceil^2.
\]
Lemma~\ref{lem:sieve}, applied with $m=L_{C_q}(n)$, gives an element of
$\V_{L_{C_q}(n)}$ which avoids every selected residue.  This contradicts
the assumed coverage.  Therefore
\[
  D_q(n)\leq
  \left\lceil\frac ns\right\rceil+C_q-1.
  \tag{5.2}\label{eq:upper-bound}
\]

It remains to prove the lower bound.  Put
\[
  M:=\left\lfloor\frac{n-1}{s}\right\rfloor.
\]
Consider the $sM+1$ classes
\[
  cx^j\pmod{x^{j+1}},
  \qquad
  0\leq j<M,\quad c\in\F_q^\times,
  \tag{5.3}\label{eq:nested-classes}
\]
together with
\[
  0\pmod{x^{M+1}}.
  \tag{5.4}\label{eq:zero-class}
\]
Every nonzero $f\in\V_M$ has a least index $j<M$ at which its
coefficient is nonzero, and then $f$ lies in the corresponding class in
\eqref{eq:nested-classes}.  The zero polynomial lies in
\eqref{eq:zero-class}.  Hence all of $\V_M$ is covered.

The polynomial $x^M$ lies in none of these classes.  If $sM+1<n$, add
$n-sM-1$ distinct classes
\[
  0\pmod{p_i},
\]
where the $p_i$ are distinct irreducibles of degree greater than $M$.
These extra classes also avoid $x^M$.  Thus there is a non-covering family
of exactly $n$ classes which covers $\V_M$, and so
\[
  D_q(n)\geq M
  =
  \left\lfloor\frac{n-1}{s}\right\rfloor.
  \tag{5.5}\label{eq:lower-bound}
\]

Combining \eqref{eq:upper-bound} and \eqref{eq:lower-bound} proves the
theorem.
\end{proof}

\section*{Declaration of generative AI use}

During the preparation of this work, the author used OpenAI’s ChatGPT 5.6 SOL to search for counterexamples to the earlier conjecture and to explore possible proof strategies. The system produced the example recorded as Example 3 and suggested the truncated CRT/Brun-sieve approach used in Lemma 7. The author independently checked and corrected the generated material, supplied the missing arguments, and takes full responsibility for the content of the manuscript. The tool was also used for language editing.

\bibliography{citations}
\bibliographystyle{plain}
\end{document}